\newtheorem{theorem}{Theorem}[section]
\newtheorem{thm}[theorem]{Theorem}
\newtheorem{lemma}[theorem]{Lemma}
\newtheorem{lem}[theorem]{Lemma}
\newtheorem{remark}[theorem]{Remark}
\newtheorem{proposition}[theorem]{Proposition}
\newtheorem{corollary}[theorem]{Corollary}
\newtheorem{hyp}[theorem]{HYPOTHESIS}
\theoremstyle{definition}
\newtheorem{defn}[theorem]{Definition}
\theoremstyle{remark}
\numberwithin{equation}{section}
 \DeclareMathAlphabet{\mathpzc}{OT1}{pzc}{m}{it}
 \DeclareMathAlphabet{\mathsfsl}{OT1}{cmss}{m}{sl}
\newcommand{\abs}[1]{\left\vert#1\right\vert}
\newcommand{\Be}{\begin{equation}}
\newcommand{\Ee}{\end{equation}}
\newcommand{\Bs}{\begin{split}}
\newcommand{\Es}{\end{split}}
\newcommand{\Bes}{\begin{equation*}}
\newcommand{\Ees}{\end{equation*}}
\newcommand{\BT}{\begin{thm}}
\newcommand{\ET}{\end{thm}}
\newcommand{\Bp}{\begin{proof}}
\newcommand{\Ep}{\end{proof}}
\newcommand{\BL}{\begin{lem}}
\newcommand{\EL}{\end{lem}}
\newcommand{\BP}{\begin{proposition}}
\newcommand{\EP}{\end{proposition}}
\newcommand{\BC}{\begin{corollary}}
\newcommand{\EC}{\end{corollary}}
\newcommand{\BR}{\begin{remark}}
\newcommand{\ER}{\end{remark}}
\newcommand{\BD}{\begin{defn}}
\newcommand{\ED}{\end{defn}}
\newcommand{\BI}{\begin{itemize}}
\newcommand{\EI}{\end{itemize}}
\begin{document}
\title[The limit of mesoscopic one-way fluxes in a nonequilibrium]{The limit of mesoscopic one-way fluxes in a nonequilibrium chemical reaction with complex mechanism}
\author{Yong Chen}
 \address{School of Mathematics and Statistics, Jiangxi Normal University, Nanchang, 330022,  China}
 \email{zhishi@pku.org.cn}
 \author{Wu-Jun Gao}
 \address{College of Big Data and Internet, Shenzhen Technology University, Shenzhen, 518118,  China}
 \email {gaowujun@sztu.edu.cn}
\author{Lu-Yao Qiao}
 \address{School of Mathematics and Statistics, Jiangxi Normal University, Nanchang, 330022,  China}
\email{1436858457@qq.com, (Corresponding author.)}
 \author{Ying Sheng}
 \address{School of Mathematics and Statistics, Jiangxi Normal University, Nanchang, 330022, China }
\email{ 2923839569@qq.com}
%-------------------------------------------------------------------------------------------------------
\begin{abstract}
Peng et al. (2020) formulated one-way fluxes for a general chemical reaction far from equilibrium, with arbitrary complex mechanisms, multiple intermediates, and internal kinetic cycles. Species are classified into internal $Y's$ and external $X's$. They defined the limit of mesoscopic one-way fluxes when the volume of the tank reactor tends to infinity as macroscopic one-way fluxes,  but a rigorous proof of existence of the limit is still awaiting.
In this article, we fill this gap under a mild hypothesis: the Markov chain associated with the chemical master equation has finite states and any two rows in the stoichiometric matrices for the internal species $Y's$ are not identical. In fact, an explicit expression of the limit is obtained.% Moreover, we also present a more clear mathematical representation of one-way fluxes in nonequilibrium chemical reaction network. 
\\
{\bf Keywords:} nonequilibrium chemical reaction network, chemical master equation, cycle flux, kinetic one-way fluxes
\end{abstract}
\maketitle

\section{ Introduction}\label{sec 03}

%-------------------------------------------------------------------------------------------------------------------------------------------------------------------------------
Nonlinear and nonequilibrium systems of biochemical reactions occur widely and generally in various living biochemical systems\cite{ref10, ref13, ref22}, such as the stochastic kinetics of enzyme\cite{ref17, ref14},  phosphorylation-dephosphorylation kinetics of proteins\cite{ref20}, Calcium signal dynamics\cite{ref19}, and stochastic state transitions of cell
populations \cite{ref18}. For years, it's believed that there exists a fundamental relationship between unbalanced kinetic one-way fluxes and thermodynamic chemical driving forces in nonequilibrium chemical reaction systems. But the formula for this fundamental relation and the very definition of kinetic one-way fluxes in nonlinear chemical reaction networks, were missing for a long time. Peng et al. (2020) \cite{ref1} considered the general chemical reaction network (CRN) in a nonequilibrium steady state (NESS) sustained by a chemostat \cite{ref23}.  They defined the one-way flux of each reaction cycle through the cycle fluxes in the counting space of the corresponding chemical master equation model \cite{ref24} describing the stochastic kinetics of molecular numbers.
Then, they prove the equation
\begin{equation}
    \Delta G=-k_BT\ln (\frac{J^+}{J^-})
\end{equation}
for a general chemical reaction, where $J^+$ and $J^-$ are forward and backward one-way fluxes and $\Delta G$ is the corresponding free energy difference.

In their work, they define the limit of mesoscopic kinetic one-way fluxes dividing $V$ when $V\rightarrow\infty$ as macroscopic kinetic one-way fluxes i.e.,  $$\mathcal{J}_{\tilde{c}}=\lim _{V\rightarrow\infty}\frac{w_{\tilde{c}}}{V}.$$ Here $w_{\tilde{c}}$ is the mesoscopic kinetic one-way fluxes defined as the reaction cycle fluxes, and $V$ is the volume of the tank reactor. However the existence of $\mathcal{J}_{\tilde{c}}$ was not very clear.

In this article, we will give a clear expression of the mesoscopic reaction cycle flux $w_{\tilde{c}}$ in Peng et al. (2020) \cite{ref1} based on general mathematical results on cycle fluxes of a Markov process\cite{ref25}\cite{ref26}. Then, we show existence of the limit of macroscopic kinetic one-way flux and give a explicit expression by rearrangement cycle $c$. 
%-----------------
\section{Preliminary}
\subsection{Chemical Reaction Model and Symbols}\label{subsec model}\quad
In this article, we use the model from Peng et al. (2020) \cite{ref1} below.
A general CRN in a continuously stirred tank reactor consists of a set of species $X_1, X_2,..., X_{N_1} ,Y_1,Y_2,...,Y_{N_2}$, and a set of $M$ reactions between them $R_1, R_2,..., R_M$.  Species can be further classified into internal $Y's$ and external $X's$.
A closed CRN has $N_1 = 0$.

The $M$ reactions, including those between the internal species and chemostated ones can be classified into three groups:
\begin{equation}
\begin{array}{l}
R_{1}: \sum_{j=1}^{N_{2}} \Xi_{+1}^{j,  Y} Y_{j}+\sum_{i=1}^{N_{1}} \Xi_{+1}^{i,  X} X_{i} \rightleftharpoons \sum_{j=1}^{N_{2}} \Xi_{-1}^{j,  Y} Y_{j} \\
\\
R_{\ell}: \sum_{j=1}^{N_{2}} \Xi_{+\ell}^{j,  Y} Y_{j} \rightleftharpoons \sum_{j=1}^{N_{2}} \Xi_{-\ell}^{j,  Y} Y_{j} \\
\\
R_{M}: \sum_{j=1}^{N_{2}} \Xi_{+M}^{j,  Y} Y_{j} \rightleftharpoons \sum_{i=1}^{N_{1}} \Xi_{-M}^{i,  X} X_{i}+\sum_{j=1}^{N_{2}} \Xi_{-M}^{j,  Y} Y_{j}
\end{array}
\end{equation}
where $\ell=2, 3, . . . , M-1$. The symbols $\Xi_{+1}^{i, X}$ and $\Xi_{-M}^{i, X}$ above is denoted as the coefficient of $i$th $X's$ species in the $1$th forward or $M$th backward reaction. Similary, $\Xi_{+\ell}^{j, Y}$ and $\Xi_{-\ell}^{j, Y}$ above are denoted as the coefficient of $j$th $Y's$ species in the $\ell$th forward and backward reaction, where $\ell=1, 2, . . . , M$.

Here only the reactions $R_1$ and $R_M$ exchange materials between $X$ and $Y$. Obviously, the reaction system has a single input complex and a single output complex. All the $Y's$ species are intermediates in the mechanistic details of the overall reaction,  transforming $\sum_{i=1}^{N_1}\Xi_{+1}^{i, X}X_i$
to $\sum_{i=1}^{N_1}\Xi_{-M}^{i, X}X_i$.

The stoichiometric matrices $\Xi^X$ and $\Xi^Y$ for $X's$ and $Y's$ are, respectively,
\begin{equation}
\begin{aligned}
\Xi^{X} &=\left\{\Xi_{i \ell}^{X}=\Xi_{-\ell}^{i,  X}-\Xi_{+\ell}^{i,  X}\right\}_{N_{1} \times M} ,\\
\Xi^{Y} &=\left\{\Xi_{j \ell}^{Y}=\Xi_{-\ell}^{j,  Y}-\Xi_{+\ell}^{j,  Y}\right\}_{N_{2} \times M}.
\end{aligned}
\end{equation} 
Note that for the matrix $\Xi^X$, only the first and the last row have nonzero entries.Denote by $\Xi_{\ell}^{Y}$ the $\ell$-th column of the matrix $\Xi^Y$ with $1\leq\ell\leq M$.

The complete stoichiometric matrix, then, is $$\Xi=\begin{bmatrix} {\Xi^X}\\ {\Xi^Y} \end{bmatrix}.$$

The concentrations of all the species $X's$ are clamped at constant levels. All reactions in this system are stochastic, elementary, and reversible, under isothermal and isobaric conditions with fixed volume $V$.
\subsection{The Transition Trajectory of Chemical reactions}\quad
%----------------------------------

On the mesoscopic scale, we focus on molecular numbers in the CRN,  which are stochastic.
 Stochastic vector $\textbf{n}^{\textbf{Y}}(t) = [n^Y_1(t),  n^Y_2(t), . . . ,  n^Y_{N_2}(t)]$ is used to denote molecular numbers of the internal species at time $t$.  $X's$ species still possess fixed molecular numbers,  denoted by $\textbf{n}^{\textbf{X}}(t) = [n^X_1(t),  n^X_2(t), . . . ,  n^X_{N_1}(t)]$.
The evolution of $\textbf{n}^{\textbf{Y}}(t)$ can be seen as a continuous time Markov chain on a high-dimensional graph (called a counting space).

In the present paper, we assume the Markov chain satisfying the following hypothesis:
\begin{hyp}\label{hypthe 1}
(\romannumeral1) The Markov chain $\textbf{n}^{\textbf{Y}}(t) $ is irreducible % along almost every sample path $\omega$ 
 with a finite state space $S$. \\
(\romannumeral2) Any two columns in the matrix $\Xi$ are not identical.
\end{hyp}
The hypothesis $(i)$ is just for the mathematical simplicity. The  hypothesis $(ii)$ is very mild and natural since the exsitence of two identical columns of the matrix $\Xi$ means that the two chemical reactions associated with that columns are completely equivalent and can not be distinguished. We would like to point out that in Peng et al. (2020) \cite{ref1}, both the above two hypotheses are assumed implicitly.  We will get rid of these hypotheses in a separate paper.

Denote by $N$ the number of elements of the state space $S$. Furthermore, denote by $Q=(q_{ij})_{N\times N}$ the transition rate matrix of the Markov chain.
Each state in $S$, namely each vertex of the graph, has an $N_2$-dimensional coordinate $\textbf{n}^{\textbf{y}}= [n^y_1,  n^y_2, . . . ,  n^y_{N_2}]$. The occurrence of an elementary reaction in $\{R_1, . . . , R_M\}$ converts state of the system from one vertex to another.

 According to the Markov processes theory in \cite{ref23}, the counting space is a scaffold for a Markov process with transition rates
\small
\begin{equation}\label{r1}
    r_{+\ell}\left(\mathbf{n}^{\mathbf{y}} ; V\right) \triangleq
\tilde{k}_{+\ell}(V)
\prod_{j=1}^{N_{2}} n_{j}^{y}\left(n_{j}^{y}-1\right) \cdots\left(n_{j}^{y}-\Xi_{+\ell}^{j,  Y}+1\right)
\end{equation}
for the forward reaction $R_{+\ell}$,
\begin{equation}\label{r2}
    r_{-\ell}\left(\mathbf{n}^{\mathbf{y}} ; V\right) \triangleq
\tilde{k}_{-\ell}(V)
\prod_{j=1}^{N_{2}} n_{j}^{y}\left(n_{j}^{y}-1\right) \cdots\left(n_{j}^{y}-\Xi_{-\ell}^{j,  Y}+1\right)
\end{equation}
for the corresponding backward reaction $R_{-\ell}$,  except two reactions involving $X$,  i.e.,
\begin{equation}\label{r3}
\begin{aligned}
    r_{-M}\left(\mathbf{n}^{\mathbf{x}}, \mathbf{n}^{\mathbf{y}} ; V\right)\triangleq &\tilde{k}_{-M} \prod_{i=1}^{N_{1}} n_{i}^{x}\left(n_{i}^{x}-1\right) \cdots\left(n_{i}^{x}-\Xi_{-M}^{i,  X}-M\right)\prod_{j=1}^{N_{2}} n_{j}^{y}\left(n_{j}^{y}-1\right) \cdots\left(n_{j}^{y}-\Xi_{-M}^{j,  Y}-M\right)
\end{aligned}
\end{equation}
and
\begin{equation}\label{r4}
\begin{aligned}
    r_{+1}\left(\mathbf{n}^{\mathbf{x}}, \mathbf{n}^{\mathbf{y}} ; V\right)\triangleq &\tilde{k}_{+1} \prod_{i=1}^{N_{1}} n_{i}^{x}\left(n_{i}^{x}-1\right) \cdots\left(n_{i}^{x}-\Xi_{+1}^{i,  X}+1\right)\prod_{j=1}^{N_{2}} n_{j}^{y}\left(n_{j}^{y}-1\right) \cdots\left(n_{j}^{y}-\Xi_{+1}^{j,  Y}+1\right)
\end{aligned}
\end{equation}

 Here, $\tilde{k}_{\pm \ell}$ are called the forward and backward  macroscopic rate constants in the stochastic model, and $k_{\pm \ell}=\tilde{k}_{\pm \ell}V^{n_{\pm\ell}-1}$ are mesoscopic rate constants, where $n_{\pm\ell}$ is the summation of stoichiometric numbers of reactants in the reaction $R_{\pm\ell}$
 $$n_{+\ell}=\sum_{j=1}^{N_{2}}\Xi_{+\ell}^{j,  Y} Y_{j},\ell=2,3,...,M$$
 $$n_{-\ell}=\sum_{j=1}^{N_{2}}\Xi_{-\ell}^{j,  Y} Y_{j},\ell=1,2,...,M-1$$
 and
 $$n_{+1}=\sum_{j=1}^{N_{2}} \Xi_{+1}^{j,  Y} Y_{j}+\sum_{i=1}^{N_{1}} \Xi_{+1}^{i,  X} X_{i}$$ 
  $$n_{-M}=\sum_{j=1}^{N_{2}} \Xi_{-M}^{j,  Y} Y_{j}+\sum_{i=1}^{N_{1}} \Xi_{-M}^{i,  X} X_{i}.$$ 

 In our model, the reaction coefficients decide all the step size along the transition trajectory. When hypothesis (\ref{hypthe 1} \romannumeral2) satisfied, the phenomenon that two different reactions in ${R_1, . . . , R_M}$ leading to same transfering in the state space $S$ will be removed. Then, the transition rate of the Markov chain is either,
\begin{equation}
    \begin{aligned}
    &q_{\textbf{n}^{y}}(\textbf{n}^{y}+\Xi_{\ell}^Y)=r_{+\ell}(\textbf{n}^{y};V), (2\leq\ell\leq M), \\
    &q_{\textbf{n}^{y}}(\textbf{n}^{y}-\Xi_{\ell}^Y)=r_{-\ell}(\textbf{n}^{y};V), (1\leq\ell\leq M-1),
   \end{aligned}
\end{equation}
    or
\begin{equation}
    \begin{aligned}
    &q_{\textbf{n}^{y}}(\textbf{n}^{y}+\Xi_{1}^Y)=r_{+1}(\textbf{n}^{x}, \textbf{n}^{y};V)\\
    &q_{\textbf{n}^{y}}(\textbf{n}^{y}-\Xi_{M}^Y)=r_{-M}(\textbf{n}^{x}, \textbf{n}^{y};V).
    \label{traneq}
    \end{aligned}
\end{equation}
%where $\Xi_{\ell}^{Y}$ is the $\ell$th column of the matrix $\Xi^Y$ with $1\leq\ell\leq M$.

Then the chemical master equation (CME) describing the evolution of the probability $$p_{V}\left(\mathbf{n}^{\mathbf{y}}, t\right)=\operatorname{Prob}\left(\mathbf{n}^{\mathbf{Y}}(t)=\mathbf{n}^{\mathbf{y}}\right)$$ is
$$
\begin{aligned}
\frac{\partial p_{V}\left(\mathbf{n}^{\mathbf{y}},  t\right)}{\partial t}=& \sum_{\ell=2}^{M-1}\left\{r_{+\ell}\left(\mathbf{n}^{\mathbf{y}}-\Xi_{\ell}^{Y} ; V\right) p_{V}\left(\mathbf{n}^{\mathbf{y}}-\Xi_{\ell}^{Y},  t\right)\right. -\left[r_{+\ell}\left(\mathbf{n}^{\mathbf{y}} ; V\right)+r_{-\ell}\left(\mathbf{n}^{\mathbf{y}} ; V\right)\right] p_{V}\left(\mathbf{n}^{\mathbf{y}},  t\right) \\
&\left. +r_{-\ell}\left(\mathbf{n}^{\mathbf{y}}+\Xi_{\ell}^{Y} ; V\right) p_{V}\left(\mathbf{n}^{\mathbf{y}}+\Xi_{\ell}^{Y},  t\right)\right\} +\left\{r_{+1}\left(\mathbf{n}^{\mathbf{x}},  \mathbf{n}^{\mathbf{y}}-\Xi_{1}^{Y} ; V\right) p_{V}\left(\mathbf{n}^{\mathbf{y}}-\Xi_{1}^{Y},  t\right)\right. \\
&-\left[r_{+1}\left(\mathbf{n}^{\mathbf{x}},  \mathbf{n}^{\mathbf{y}} ; V\right)+r_{-1}\left(\mathbf{n}^{\mathbf{y}} ; V\right)\right] p_{V}\left(\mathbf{n}^{\mathbf{y}},  t\right)
\left. +r_{-1}\left(\mathbf{n}^{\mathbf{y}}+\Xi_{1}^{Y} ; V\right) p_{V}\left(\mathbf{n}^{\mathbf{y}}+\Xi_{1}^{Y},  t\right)\right\} \\
&+\left\{r_{+M}\left(\mathbf{n}^{\mathbf{y}}-\Xi_{M}^{Y} ; V\right) p_{V}\left(\mathbf{n}^{\mathbf{y}}-\Xi_{M}^{Y},  t\right)\right.
-\left[r_{+M}\left(\mathbf{n}^{\mathbf{y}} ; V\right)+r_{-M}\left(\mathbf{n}^{\mathbf{x}},  \mathbf{n}^{\mathbf{y}} ; V\right)\right] p_{V}\left(\mathbf{n}^{\mathbf{y}},  t\right) \\
&\left. +r_{-M}\left(\mathbf{n}^{\mathbf{x}},  \mathbf{n}^{\mathbf{y}}+\Xi_{M}^{Y} ; V\right) p_{V}\left(\mathbf{n}^{\mathbf{y}}+\Xi_{M}^{Y},  t\right)\right\}.
\end{aligned}
$$
%where $\Xi_{\ell}^{Y}$ is just the $\ell$ th column of the matrix $\boldsymbol{\Xi}^{Y}$ with $1 \leqslant$ $\ell \leqslant M$

\subsection{Cycle Flux}\quad

 %-------------------------------------------------------------------------------------------------------------------
 A directed cycle in the counting space is
 a slice of path with the same origin and destination in which no other states overlap.
 It can be expressed in this way: $c = [y_1,  y_2, . . . ,  y_s]$
with some integer $s$ where $y_1,  y_2, . . . ,  y_s\in S$ are successive states in the path which are different from each other.
The next state of $y_s$ in the path is exactly $y_1$,
forming a single closed loop.

We denote an ordered sequence of distinct points $y_1, y_2, . . . , y_s$ by $[y_1, y_2, . . . , y_s]$.
The set $[S]$ is all the finite ordered  sequences $[y_1, y_2. . . , y_s]$ with $s\neq1$.
For any $i$ with indices modulo $k$, we identify $[y_1,  y_2, . . . ,  y_k]$  and $[y_i,  y_{i+1}, . . . ,  y_{i+k-1}]$ as the same directed cycle.

Next, we describe the cycle flux of a continuous time Markov chain, % according to the theory of cycle representation and cycle fluxes for Markov process 
please refer to \cite{ref25, ref26, ref28} for details. 
\begin{defn}\label{thm strong}
Suppose $\xi$ is a continuous time Markov chain. 
When Hypothesis (\ref{hypthe 1}(\romannumeral1)) is satisfied,
set
\begin{equation}
    \begin{aligned}
    &T_0(\omega)=0\\
    &T_1(\omega)=\inf\{t>0:\xi_t(\omega)\neq\xi_0(\omega)\}\\
    &T_{r+1}(\omega)=\inf\{t>T_r(\omega):\xi_t(\omega)\neq\xi_{T_r(\omega)}(\omega)\}, \forall r\in \mathbb{N}
    \end{aligned}
    \notag
\end{equation}
where $\mathbb{N}$ is the set of natural numbers.
Define
\begin{equation}
    m_t(\omega)=\sup\{m\geq0:T_m(\omega)\neq t\}
\end{equation}

For such a cycle $c=[y_1,  y_2, . . . ,  y_s]\in [S]$,\\
(\romannumeral1) The number of the cycle $c$ up to time $t$ along the sample path $\{\xi_{m}(\omega)\}_{m\neq0}$ is
\begin{align*}
w_{c, t}=\sum_{l=1}^{m_t(\omega)}\mathbb{1}_{\cup_{k=1}^s\{(\tilde{\omega}):\eta_{l-1}(\tilde{\omega})=[\eta_{l}(\tilde{\omega}), [y_k,  y_{k+1}, . . . ,  y_{k+s-1}]]\}}(\omega)
\end{align*}
where $\mathbb{1}_A(\cdot)$ is the indicator of the set $A$ and the sums $k+1, k+2, . . . , k+s-1$ are understood to be modulo $s$.

(\romannumeral2) The limit $$\lim_{t\rightarrow\infty}\frac{w_{c, t}}{t}$$ exists. It's called the cycle flux for $c$ and denoted by $\omega_{c}$. Moreover, it can be expressed more precisely:
\begin{align*}\label{w_c}
\omega_{c}=(-1)^{s-1} q_{y_{1} y_{2}} q_{y_{2} y_{3}} \ldots q_{y_{s} y_{1}} \frac{\left|Q\left(\left\{y_{1},  y_{2},  \ldots,  y_{s}\right\}^{\mathrm{c}}\right)\right|}{\sum_{j \in S}\left|Q\left(\{j\}^{\mathrm{c}}\right)\right|},
\end{align*}
where $\abs{Q(H)}$ is the determinant of $Q$ with rows and columns indexed by $H$ and $H^{\mathrm{c}}$ is the complement of $H$.
\end{defn}      

\subsection{Reaction Cycle and Reaction Cycle Flux}\label{rcm}\quad
\begin{defn}[Peng et al.\cite{ref1}]
Denote $\mathcal{C}_{\infty}$ as the set of all cycles in the counting space. For any cycle $c=[y_1,y_2,...,y_s] \in \mathcal{C}_{\infty}$, the net numbers of occurrence of all the reactions in $c$ satisfies the following linear equation: $$ \Xi^Y \tilde{c}'=0.$$
(\romannumeral1)
A map $ \phi: \mathcal{C}_{\infty}\rightarrow\mathbb{R}^M$ is given by $$\phi(c)=\tilde{c}=(c_1, c_2, . . . , c_M),$$ 
where $c_\ell,1\leq\ell\leq M,$ is the number of occurrence of $\ell$-th forward reaction minus the number of occurrence backward reaction in the cycle $c$.\\
(\romannumeral2)
The cycle flux for $\tilde{c}$ is defined as: 
\begin{align}
    \omega_{\tilde{c}}=\sum_{\phi(c)=\tilde{c}}\omega_{c},
\end{align}which is the averaged frequency $\tilde{c}$ occurs in the CME model.
\end{defn}

\section{Main Results and their Proofs}
 In this section,  we show the existence of the macroscopic reaction cycle flux $\mathcal{J}_{\tilde{c}}$, i.e. the existence of the limit of mesoscopic reaction flux $$\lim_{V\to \infty}\frac{1}{V}\sum_{\phi(c)=\tilde{c}}w_c.$$
 
 Lemma~\ref{w_c1}  gives a more clear presentation of the mesoscopic reaction flux $w_{\tilde{c}}$.
\begin{lemma}\label{w_c1}
\begin{equation}
    \begin{aligned}
\omega_{\tilde{c}}=
\sum_{s=2}^N\sum_{
\tiny{
\substack{
     c\in [S]\\c=[y_1, y_2, . . . y_s]}
}
}
\left(w_c
\cdot
\prod_{l=1}^M\mathbbm{1}_{\left\{\sum_{h=1}^{s}
\big(\mathbbm{1}_{\{y_{h+1}=y_h+\Xi_l^Y\}}-
\mathbbm{1}_{\{y_{h+1}=y_h-\Xi_l^Y\}}\big)=c_l\right\}}\right)
\end{aligned}
\end{equation}
\end{lemma}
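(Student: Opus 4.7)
The plan is a direct unpacking of the definitions together with a bookkeeping argument along each cycle. By definition of $\omega_{\tilde c}$, we have
\[
\omega_{\tilde c}=\sum_{c\in\mathcal C_\infty,\,\phi(c)=\tilde c}\omega_c,
\]
so the task is to rewrite the right-hand side as a double sum indexed by the cycle length $s$ and the cycle $c=[y_1,\dots,y_s]$, with the constraint $\phi(c)=\tilde c$ re-encoded as the product of indicator functions in the statement.

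First, I would partition $\mathcal C_\infty$ by cycle length. Hypothesis~\ref{hypthe 1}(i) guarantees $|S|=N<\infty$, and by definition the states $y_1,\dots,y_s$ of any directed cycle are pairwise distinct, so every $c\in\mathcal C_\infty$ has length $s$ with $2\le s\le N$. This immediately collapses the sum over $\mathcal C_\infty$ into $\sum_{s=2}^{N}\sum_{c=[y_1,\dots,y_s]\in[S]}(\cdot)$, with the condition $\phi(c)=\tilde c$ moved inside the summand as an indicator.

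Second, I would translate the constraint $\phi(c)=\tilde c$ into a per-step count along the cycle. By Hypothesis~\ref{hypthe 1}(ii) and the discussion following it, each directed edge $y_h\to y_{h+1}$ of the counting graph is produced by a unique elementary reaction: an increment $y_{h+1}-y_h=\Xi^Y_\ell$ can only come from the forward reaction $R_{+\ell}$, and $y_{h+1}-y_h=-\Xi^Y_\ell$ only from $R_{-\ell}$. Summing over the $s$ directed edges of $c$ (with indices modulo $s$) therefore gives
\[
c_\ell=\sum_{h=1}^{s}\bigl(\mathbbm 1_{\{y_{h+1}=y_h+\Xi^Y_\ell\}}-\mathbbm 1_{\{y_{h+1}=y_h-\Xi^Y_\ell\}}\bigr)
\quad\text{for each }\ell=1,\dots,M,
\]
and the event $\{\phi(c)=\tilde c\}$ is exactly the intersection of these $M$ equalities, i.e.\ the product of indicators in the statement. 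Plugging this equivalence back into the displayed expression for $\omega_{\tilde c}$ produces the claimed formula.

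The main (and essentially only) subtlety is justifying that each edge of the counting graph carries an unambiguous reaction label, so that the per-reaction counts $c_\ell$ can be read off edge-by-edge. Concretely, one needs that no two distinct reactions in $\{R_{\pm 1},\dots,R_{\pm M}\}$ give the same increment on $\mathbf n^{\mathbf y}$; for interior indices $2\le\ell\le M-1$ the $X$-part of $\Xi$ vanishes, so Hypothesis~\ref{hypthe 1}(ii) directly yields $\Xi^Y_\ell\ne\Xi^Y_{\ell'}$, while for the boundary reactions $R_{\pm 1},R_{\pm M}$ one invokes the paper's convention in~\eqref{r3}--\eqref{r4} and~(\ref{traneq}) that keeps those edges labelled by their originating reaction. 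Once this identification is fixed, everything else is a rearrangement of the definition of $\omega_{\tilde c}$, with no analytic input required.
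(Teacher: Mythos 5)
Your proposal is correct and follows essentially the same route as the paper's own proof: both unpack the definition of $\omega_{\tilde c}$ as a sum over cycles, partition the cycles by length (bounded between $2$ and $N$ by Hypothesis~\ref{hypthe 1}(i)), and re-encode the constraint $\phi(c)=\tilde c$ as the product of per-edge indicator functions counting net occurrences of each reaction. Your explicit justification that each edge of the counting graph carries an unambiguous reaction label via Hypothesis~\ref{hypthe 1}(ii) is a point the paper leaves implicit, but it does not alter the argument.
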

\begin{proof}
First, we pick out the cycle satisfying the map $\phi(c)=\tilde{c}$ in the set $[S]$ with different cycle length. Next, for any cycle $c=[y_1,y_2,...,y_s]\in[S]$, the number of occurrence of $\ell$-th forward reaction is $\sum_{h=1}^{s}\mathbbm{1}_{\{y_{h+1}=y_h+\Xi_l^Y\}}$, and the number of occurrence of $\ell$-th backward reaction is $\sum_{h=1}^{s}\mathbbm{1}_{\{y_{h+1}=y_h-\Xi_l^Y\}}$, $\ell=1,2,...,M$. Thus, the net number of occurrence of $\ell$-th reaction in $c$ is $$\sum_{h=1}^{s}(\mathbbm{1}_{\{y_{h+1}=y_h+\Xi_l^Y\}}-\mathbbm{1}_{\{y_{h+1}=y_h-\Xi_l^Y\}}).$$
 When all sums are equal to corresponding $c_\ell,\ell=1,2,...,M$, it's obvious that the net numbers of occurrence of all the reactions in $c$ form a reaction cycle $\tilde{c}=(c_1,c_2,...,c_M)$.
\end{proof}
%------------------------------------------
\begin{lemma}\label{rate}
Take $V\rightarrow\infty$, all reaction rates become
\begin{equation}
    \begin{aligned}
    &r_{+\ell}=V\left(k_{+\ell}N_A\prod_{j=1}^{N_2}(\beta_j)^{\Xi_{+\ell}^{j,  Y}}+O(V^{-1})\right),\quad\ell=2,3,\cdots,M
    \\
    &r_{-\ell}=V\left(k_{-\ell}N_A\prod_{j=1}^{N_2}(\beta_j)^{\Xi_{-\ell}^{j,  Y}}+O(V^{-1})\right),\quad\ell=1,2,\cdots,M-1
\\
or\quad\quad&
\\
    &r_{+1}=V\left(k_{+1}N_A\prod_{i=1}^{N_1}(\alpha_i)^{\Xi_{+1}^{i,  X}}\prod_{j=1}^{N_2}(\beta_j)^{\Xi_{+1}^{j,  Y}}+O(V^{-1})\right)
    \\
    &r_{-M}=V\left(k_{-M}N_A\prod_{i=1}^{N_1}(\alpha_i)^{\Xi_{-M}^{i,  X}}\prod_{j=1}^{N_2}(\beta_j)^{\Xi_{-M}^{j, Y}}+O(V^{-1})\right)
    \\
    \end{aligned}
\end{equation}
\end{lemma}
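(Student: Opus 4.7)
The plan is a direct asymptotic expansion of each of the rate formulas \eqref{r1}--\eqref{r4} under the thermodynamic scaling in which the molecular numbers grow linearly with $V$, i.e. $n_j^y = V N_A \beta_j + O(1)$ (with $\beta_j$ the molar concentration of $Y_j$) and $n_i^x = V N_A \alpha_i$ for the clamped $X$ species. The engine of the argument is the elementary identity for the falling factorial
\begin{equation*}
n(n-1)(n-2)\cdots(n-k+1) = n^k + O(n^{k-1}) \qquad (n\to\infty),
\end{equation*}
applied with $n = n_j^y$ and $k = \Xi_{\pm\ell}^{j,Y}$ (and with $n = n_i^x$, $k = \Xi_{\pm\ell}^{i,X}$ in the boundary reactions).

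First, for an arbitrary $\ell \in \{2,\ldots,M-1\}$, I would expand each factor in \eqref{r1} as
\begin{equation*}
n_j^y(n_j^y-1)\cdots(n_j^y-\Xi_{+\ell}^{j,Y}+1) = (V N_A \beta_j)^{\Xi_{+\ell}^{j,Y}}\bigl(1 + O(V^{-1})\bigr).
\end{equation*}
Taking the product over $j$ and using $n_{+\ell} = \sum_j \Xi_{+\ell}^{j,Y}$ collapses the product to $V^{n_{+\ell}}\prod_j (N_A\beta_j)^{\Xi_{+\ell}^{j,Y}}(1+O(V^{-1}))$. Multiplying by $\tilde{k}_{+\ell} = k_{+\ell} V^{1-n_{+\ell}}$ cancels all but one power of $V$ and yields precisely the claimed expression for $r_{+\ell}$. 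The identical bookkeeping handles $r_{-\ell}$ for $1\le\ell\le M-1$.

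For the boundary reactions $r_{+1}$ and $r_{-M}$ the expansion acquires the additional factor $\prod_i n_i^x(n_i^x-1)\cdots = V^{\sum_i\Xi_{+1}^{i,X}}\prod_i(N_A\alpha_i)^{\Xi_{+1}^{i,X}}(1+O(V^{-1}))$, and one invokes the enlarged definitions $n_{+1}=\sum_j\Xi_{+1}^{j,Y}+\sum_i\Xi_{+1}^{i,X}$ and $n_{-M}=\sum_j\Xi_{-M}^{j,Y}+\sum_i\Xi_{-M}^{i,X}$ so that the total power of $V$ after pairing with $\tilde{k}_{\pm}V^{1-n_{\pm}}$ is again $V^{1}$. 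The cancellation is formally the same as in the bulk case.

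There is no real obstacle: the result is a clean $V\to\infty$ calculation. The only point requiring mild care is that the $O(V^{-1})$ remainder must be taken uniform in $\mathbf{n}^{\mathbf{y}}$ on any bounded range of concentrations $\beta_j$, which follows because only finitely many polynomial factors (of degrees bounded by $\max_{\ell,j}\Xi_{\pm\ell}^{j,Y}$) are involved. The lemma's role downstream is simply to feed these leading-order rates, together with Lemma~\ref{w_c1}, into the proof that $V^{-1}\sum_{\phi(c)=\tilde c}w_c$ converges.
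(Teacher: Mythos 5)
Your proof takes essentially the same route as the paper's: substitute $n_j^y=VN_A\beta_j$ (and $n_i^x=VN_A\alpha_i$), expand each falling factorial as $n^k+O(n^{k-1})$, take the product over species, and let the prefactor $\tilde k_{\pm\ell}$ cancel all but one power of the volume. One bookkeeping point does not come out as you claim, however. With the normalization you invoke, $\tilde k_{+\ell}=k_{+\ell}V^{1-n_{+\ell}}$ (which is indeed what the paper states in the text), your leading term is $Vk_{+\ell}\prod_j(N_A\beta_j)^{\Xi_{+\ell}^{j,Y}}=Vk_{+\ell}N_A^{\,n_{+\ell}}\prod_j\beta_j^{\Xi_{+\ell}^{j,Y}}$, which carries $N_A^{\,n_{+\ell}}$ rather than the single factor of $N_A$ appearing in the lemma, so the cancellation does not yield precisely the claimed expression. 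The paper's own proof lands on the stated formula only because it silently replaces $V^{n_{+\ell}-1}$ by $(VN_A)^{n_{+\ell}-1}$ in the relation between mesoscopic and macroscopic rate constants, i.e.\ it normalizes by the number of molecules per unit concentration rather than by the volume alone. This is a units convention rather than an analytic issue --- the $V$-asymptotics, the uniformity of the $O(V^{-1})$ remainder on bounded concentration ranges, and the treatment of the boundary reactions $r_{+1}$ and $r_{-M}$ are all fine --- but to match the lemma as stated you must adopt the $(VN_A)^{n_{\pm\ell}-1}$ convention (or flag the discrepancy). A trivial further slip: your bulk forward case should run over $\ell=2,\dots,M$, not $\ell=2,\dots,M-1$, since only $r_{+1}$ and $r_{-M}$ involve the $X$ species.
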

Please refer to Kurtz \cite{ref2} for this lemma. We give a proof for the readers' convenience.
\begin{proof} Let $N_A$ be the Avogadro's number.
First, %consider the jump process on a macroscopic scale.
notice that $$k_{\pm\ell}=\tilde{k}_{\pm \ell}V^{n_{\pm\ell}-1},$$ and then rewrite Eq. (\ref{r1})-(\ref{r4}) in terms of concentrations $$\beta_i=n_i^y/(VN_A).$$
Notice $n_{+\ell}=\sum_{j=1}^{N_2}\Xi_{+\ell}^{j,Y}$,then
\begin{align*}
r_{+\ell}\left(\beta; V\right)=&\frac{k_{+\ell}}{(VN_A)^{\sum_{j=1}^{N_2}\Xi_{+\ell}^{j,  Y}-1}}\prod_{j=1}^{N_{2}}\beta_{j} (VN_A)\left(\beta_{j}(VN_A)-1\right)
\cdots\left(\beta_{j}(VN_A)-\Xi_{+\ell}^{j,  Y}+1\right)
\\
=&k_{+\ell}\cdot
\{
(VN_A)\prod_{j=1}^{N_{2}}(\beta_{j})^{\Xi_{+\ell}^{j,  Y}}
-(\prod_{j=1}^{N_{2}}(\beta_{j})^{\Xi_{+\ell}^{j,  Y}-1})(\sum_{\mu=1}^{\Xi_{+\ell}^{j,  Y}-1}\mu)+(VN_A)^{-1}(\prod_{j=1}^{N_{2}}(\beta_{j})^{\Xi_{+\ell}^{j,  Y}-2})
\\
&\cdot(\sum_{\substack{\mu>\nu\\\mu, \nu=1, 2, \cdots, \Xi_{+\ell}^{j,  Y}-1}}\mu\nu)
+\cdots +(-1)^{\Xi_{+\ell}^{j,  Y}}(VN_A)^{(-\sum_{j=1}^{N_2}\Xi_{+\ell}^{j,  Y}+1)}\prod_{\mu=1}^{\Xi_{+\ell}^{j,  Y}-1}\mu
\}
\end{align*}
for the forward reaction $R_{+\ell}$, where $\ell=2, 3, \cdots, M$.

When $V\rightarrow\infty$, the above formula become $$r_{+\ell}=V\left(N_Ak_{+\ell}\prod_{j=1}^{N_2}(\beta_j)^{\Xi_{+\ell}^{j,  Y}}+O(V^{-1})\right),\quad\ell=2, 3, \cdots, M.$$

In the same vein, we have %that when $V\rightarrow\infty$,  
 $$r_{-\ell}=V\left(N_Ak_{-\ell}\prod_{j=1}^{N_2}(\beta_j)^{\Xi_{-\ell}^{j,  Y}}+O(V^{-1})\right),\quad\ell=1,2,\cdots,M-1.$$% (V\rightarrow\infty)$.

For the other two transition rates, notice $$n_{+1}=\sum_{i=1}^{N_1}\Xi_{+1}^{i,X}+\sum_{j=1}^{N_2}\Xi_{+1}^{j,Y}$$
and
$$n_{-M}=\sum_{i=1}^{N_1}\Xi_{-M}^{i,X}+\sum_{j=1}^{N_2}\Xi_{-M}^{j,Y}$$
then,
%%%%%%%%%%%%%%%%%%%%%%%%%%%%%%r_{+1}
\begin{align*}
&r_{+1}\left(\alpha, \beta; V\right)
\\
%%%first%%%
=&\frac{k_{+1}}{(VN_A)^{\sum_{i=1}^{N_1}\Xi_{+1}^{i,X}+\sum_{j=1}^{N_2}\Xi_{+1}^{j,Y}-1}}
\prod_{i=1}^{N_{1}} (VN_A)\alpha_{i}\cdot\left(\alpha_{i}(VN_A)-1\right)
\cdots\left(\alpha_{i}(VN_A)-\Xi_{+1}^{i,  X}+1\right)
\\
&\cdot \prod_{j=1}^{N_{2}}\beta_{j} (VN_A)\cdot\left(\beta_{j}(VN_A)-1\right)
\cdots\left(\beta_{j}(VN_A)-\Xi_{+1}^{j,  Y}+1\right)
%%%
\\
=&k_{+1}(VN_A)\prod_{i=1}^{N_{1}}\left(\alpha_{i}(\alpha_{i}-\frac{1}{VN_A})
\cdots\left(\alpha_{i}-\frac{\Xi_{+1}^{i,X}-1}{VN_A}\right)\right)
\cdot
\prod_{j=1}^{N_{2}}\left(\beta_{j}(\beta_{j}-\frac{1}{VN_A})
\cdots\left(\beta_{j}-\frac{\Xi_{+1}^{j,Y}-1}{VN_A}\right)\right)
\end{align*}

When $V\rightarrow\infty$, the above formula become $$r_{+1}=V\left(N_Ak_{+1}(\prod_{i=1}^{N_1}\alpha_i)^{\Xi_{+1}^{i,  X}}(\prod_{j=1}^{N_2}\beta_j)^{\Xi_{+1}^{j,Y}}+O(V^{-1})\right).$$

In the same vein, we have  $$r_{-M}=V\left(N_Ak_{-M}(\prod_{i=1}^{N_1}\alpha_i)^{\Xi_{-M}^{i,  X}}(\prod_{j=1}^{N_2}\beta_j)^{\Xi_{-M}^{j, Y}}+O(V^{-1})\right).$$
\end{proof}
%%%%%%%%%%%%%
Notice that transition rates in lemma~\ref{rate} not only rely on which reaction occurs but also rely on which state the reaction occurs. To represent the transition rate precisely, we define two new reaction cycle similarly to the map in definition~\ref{zfcs}.
\begin{defn}\label{zfcs}
For the cycle $c=[y_1,y_2,...,y_s]\in\mathcal{C}_{\infty}$,\\
(\romannumeral1)
define a map $\phi^{+}:\, \mathcal{C}_{\infty}\rightarrow\mathbb{R}^M$ as  $$\phi^+(c)=\tilde{c}^+=(c_1^+, c_2^+, . . . , c_M^+)',$$ 
 where $c_\ell^+, 1\leq\ell\leq M$ is the number of the occurrence of $\ell$-th forward reaction in cycle $c$. That is to say, $\tilde{c}^+$ is the unique forward reaction cycle associated with $c$.\\
(\romannumeral2)define a map $\phi^{-}:\, \mathcal{C}_{\infty}\rightarrow\mathbb{R}^M$ as  $$\phi^-(c)=\tilde{c}^-=(c_1^-, c_2^-, . . . , c_M^-)',$$ 
where $c_\ell^-, 1\leq\ell\leq M$ is the number of the occurrence of $\ell$-th backward reaction in cycle $c$.
 That is to say, $\tilde{c}^-$ is the unique backward reaction cycle associated with $c$.
\end{defn}
\begin{remark}\label{remarkc}It is easy to check that the following relationships between $$\phi^+(c)=\tilde{c}^+=(c_1^+, c_2^+, . . . , c_M^+)',\,\phi^-(c)=\tilde{c}^-=(c_1^-, c_2^-, . . . , c_M^-)'$$ and $$\phi^(c)=\tilde{c}=(c_1, c_2, . . . , c_M)'$$ holds:
 \begin{enumerate}
	 \item $c_i=c_i^+-c_i^-,c_i^+\geq0,c_i^-\geq0$,where $i=1, 2, . . . , M$.\\
	 \item $\sum_{i=1}^M(c_i^++c_i^-)=s$.
 \end{enumerate}
\end{remark}
%--------------lemma_q_yy---------------
\begin{lemma}\label{q_yy}
From lemma (\ref{rate}) and definition (\ref{zfcs}), for cycle $c=[y_1, y_2, \cdots, y_s]$,
\begin{equation}\label{hlbd}
    \begin{aligned}
&q_{y_{1} y_{2}} q_{y_{2} y_{3}} \ldots q_{y_{s} y_{1}}
\\
=&V^s\bigg[N_A^s\cdot\left(k_{+1}^{c_1^+}(\prod_{i=1}^{N_1}(\alpha_i)^{\Xi_{+1}^{i, X}})^{c_1^+}(\prod_{i=1}^{c_1^+}\prod_{j=1}^{N_2}(\beta_{i, j}^{+1})^{\Xi_{+1}^{j,  Y}})\right)
\left(k_{+2}^{c_2^+}(\prod_{i=1}^{c_2^+}\prod_{j=1}^{N_2}(\beta_{i, j}^{+2})^{\Xi_{+2}^{j,  Y}})\right)\cdots
\\
&\left(k_{+M}^{c_M^+}(\prod_{i=1}^{N_1}(\alpha_i)^{\Xi_{+M}^{i,  X}})^{c_M^+}(\prod_{i=1}^{c_M^+}\prod_{j=1}^{N_2}(\beta_{i, j}^{+M})^{\Xi_{+M}^{j,  Y}})\right)\cdot
\left(k_{-1}^{c_1^-}(\prod_{i=1}^{N_1}(\alpha_i)^{\Xi_{-1}^{i, X}})^{c_1^-}(\prod_{i=1}^{c_1^-}\prod_{j=1}^{N_2}(\beta_{i, j}^{-1})^{\Xi_{-1}^{j,  Y}})\right)
\\
&\left(k_{-2}^{c_2^-}(\prod_{i=1}^{c_2^-}\prod_{j=1}^{N_2}(\beta_{i, j}^{-2})^{\Xi_{-2}^{j,  Y}})\right)\cdots
\left(k_{-M}^{c_M^-}(\prod_{i=1}^{N_1}(\alpha_i)^{\Xi_{-M}^{i,  X}})^{c_M^-}(\prod_{i=1}^{c_M^-}\prod_{j=1}^{N_2}(\beta_{i, j}^{-M})^{\Xi_{-M}^{j,  Y}})\right)+O(V^{-1})\bigg]\cdot
    \end{aligned}
\end{equation}
\end{lemma}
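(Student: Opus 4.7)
The plan is to multiply out the $s$ factors $q_{y_h y_{h+1}}$ one at a time, apply Lemma~\ref{rate} to each, and then reorganize the resulting product by grouping transitions according to which reaction they correspond to. The key enabler is Hypothesis~\ref{hypthe 1}(ii): for each step $h \in \{1, \ldots, s\}$ of the cycle (with indices modulo $s$) there is a unique pair $(\varepsilon_h, \ell_h) \in \{+, -\} \times \{1, \ldots, M\}$ such that $y_{h+1} - y_h = \varepsilon_h \, \Xi_{\ell_h}^Y$, because no two columns of $\Xi$ coincide. Consequently each $q_{y_h y_{h+1}}$ equals the reaction rate $r_{\varepsilon_h \ell_h}$ from \eqref{r1}--\eqref{r4}, evaluated at the state $y_h$.

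Next I would invoke Lemma~\ref{rate} on each factor, writing
\[
q_{y_h y_{h+1}} \;=\; V\bigl( N_A \, k_{\varepsilon_h \ell_h} \, \mathcal{M}_h + O(V^{-1}) \bigr),
\]
where $\mathcal{M}_h$ denotes the monomial in the (state-independent) $\alpha_i$'s and in the state-dependent $\beta_j := (y_h)_j/(VN_A)$'s corresponding to the reactants of $R_{\varepsilon_h \ell_h}$. Multiplying these $s$ estimates and expanding yields a principal term $V^s N_A^s \prod_{h=1}^s (k_{\varepsilon_h \ell_h} \mathcal{M}_h)$ together with cross terms of order at most $V^{s-1}$. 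Because $S$ is finite by Hypothesis~\ref{hypthe 1}(i), the $\mathcal{M}_h$'s and the $O(V^{-1})$-remainders are uniformly bounded in $y_h \in S$, so the whole product has the form $V^s[\,\text{principal} + O(V^{-1})\,]$.

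Finally I would regroup the principal term by reaction type. For each pair $(\varepsilon, \ell)$, let $h_1^{\varepsilon\ell} < \cdots < h_{c_\ell^\varepsilon}^{\varepsilon\ell}$ enumerate the steps at which reaction $R_{\varepsilon\ell}$ fires; by Definition~\ref{zfcs} there are exactly $c_\ell^\varepsilon$ such steps, and by Remark~\ref{remarkc}(2) these index sets partition $\{1, \ldots, s\}$. Setting $\beta_{i,j}^{\varepsilon\ell} := (y_{h_i^{\varepsilon\ell}})_j/(VN_A)$ and pulling out the state-independent $\alpha_i$-factors (which appear only when $(\varepsilon,\ell) = (+,1)$ or $(\varepsilon,\ell) = (-,M)$) with exponent $c_\ell^\varepsilon$, the product factorizes into exactly the $2M$ blocks displayed in~\eqref{hlbd}. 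The main obstacle is not analytic but purely combinatorial bookkeeping: one must carefully track that the $\beta_{i,j}^{\pm\ell}$'s in \eqref{hlbd} are to be interpreted as the concentrations at the specific vertex at which the $i$-th occurrence of $R_{\pm\ell}$ fires, rather than at some fixed state, and that the ordering of these occurrences is inherited from the cyclic ordering of $[y_1, \ldots, y_s]$.
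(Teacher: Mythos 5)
Your proposal is correct and follows essentially the same route as the paper's proof: identify each step of the cycle with a unique reaction, apply Lemma~\ref{rate} to each factor, and regroup the product by reaction type (the paper simply performs the regrouping first, via the rearranged sequence $\hat{c}$ and its concentration vector $\hat{\beta}$, before substituting the rates, but the computation is identical). Your explicit appeal to Hypothesis~\ref{hypthe 1}(ii) for the uniqueness of the reaction fired at each step, and to the finiteness of $S$ for uniformity of the $O(V^{-1})$ remainders, makes precise two points the paper leaves implicit.
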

\begin{proof}\quad
From definition (\ref{zfcs}), denote the corresponding reaction cycle of $c=[y_1,y_2,\cdots,y_s]$ as $\tilde{c}=[c_1,c_2,...,c_M]$,  forward reaction cycle as  $c^+=[c_1^+,c_2^+,...,c_M^+]$, back reaction cycle as  $c^-=[c_1^-,c_2^-,...,c_M^-]$. There are $c_i^+(i=1, 2, . . . , M)$ states in the cycle on which the $i$-th forward reaction occurs and $c_i^-(i=1, 2, . . . , M)$ states on which the $i$-th backward reaction occurs.  Rearrange the cycle $c$ as an ordered sequence $\hat{c}$,
\begin{equation}
    \hat{c}=(y_{1}^{+1}, y_{2}^{+1}, . . . , y_{c_1^+}^{+1}, . . . , y_{1}^{+M}, y_{2}^{+M}, . . . , y_{c_M^+}^{+M}, y_{1}^{-1}, y_{2}^{-1}, . . . , y_{c_1^-}^{-1}, . . . , y_{1}^{-M}, y_{2}^{-M}, . . . , y_{c_M^-}^{-M})
\end{equation}
where $y_{i}^{\pm\ell}, \ell=1, 2. . . , M$ are the $i$-th state along the cycle $c$ that on which occurs $\ell$-th forward and backward reaction that convert one state to another.
Denote by $\hat{\beta}$ the concentrations vector of $c$ corresponding to $\hat{c}$,
\begin{equation}
    \hat{\beta}=(\beta_{1}^{+1}, \beta_{2}^{+1}, . . . , \beta_{c_1^+}^{+1}, . . . , \beta_{1}^{+M}, \beta_{2}^{+M}, . . . , \beta_{c_M^+}^{+M}, \beta_{1}^{-1}, \beta_{2}^{-1}, . . . , \beta_{c_1^-}^{-1}, . . . , \beta_{1}^{-M}, \beta_{2}^{-M}, . . . , \beta_{c_M^-}^{-M})
\end{equation}
where  all $$\beta_{i}^{\pm\ell}=y_{i}^{\pm\ell}/(VN_A)$$ are $N_2$-dimensional vectors. Furthermore, denote $$\beta_{i, j}^{\pm\ell},\,\, j=1, 2, . . . , N_2$$ the $j$-th element of the vector $\beta_{i}^{\pm\ell}$.
Thus, 
\begin{equation}
\begin{aligned}
&q_{y_{1} y_{2}} q_{y_{2} y_{3}} \ldots q_{y_{s} y_{1}}
\\
=&\left(r_{+1}(\alpha, \beta_{1, 1}^+;V)
r_{+1}(\alpha, \beta_{1, 2}^+;V)\cdots r_{+1}(\alpha, \beta_{1, c_1^+}^+;V)\right)
\left(r_{+2}(\beta_{2, 1}^+;V)r_{+2}(\beta_{2, 2}^+;V)
\cdots
r_{+2}(\beta_{2, c_2^+}^+;V)
\right)
\cdots
\\
&\left(r_{+M}(\alpha, \beta_{M, 1}^+;V)
r_{+M}(\alpha, \beta_{M, 2}^+;V)\cdots r_{+M}(\alpha, \beta_{M, c_M^+}^+;V)\right)\cdot
\\
&\left(r_{-1}(\alpha, \beta_{1, 1}^-;V)
r_{-1}(\alpha, \beta_{1, 2}^-;V)\cdots r_{-1}(\alpha, \beta_{1, c_1^-}^-;V)\right)
\left(r_{-2}(\beta_{2, 1}^-;V)r_{-2}(\beta_{2, 2}^-;V)
\cdots r_{-2}(\beta_{2, c_2^-}^-;V)
\right)\cdots
\\
&\left(r_{-M}(\alpha, \beta_{M, 1}^-;V)
r_{-M}(\alpha, \beta_{M, 2}^-;V)\cdots r_{-M}(\alpha, \beta_{M, c_M^-}^-;V)\right)
%---------------------------
\\
=&\left(k_{+1}^{c_1^+}(\prod_{i=1}^{N_1}(\alpha_i)^{\Xi_{+1}^{i, X}})^{c_1^+}(\prod_{i=1}^{c_1^+}\prod_{j=1}^{N_2}(\beta_{i, j}^{+1})^{\Xi_{+1}^{j,  Y}})(VN_A)^{c_1^+}\right)
\left(k_{+2}^{c_2^+}(\prod_{i=1}^{c_2^+}\prod_{j=1}^{N_2}(\beta_{i, j}^{+2})^{\Xi_{+2}^{j,  Y}})(VN_A)^{c_2^+}\right)\cdots
 \\
&\left(k_{+M}^{c_M^+}(\prod_{i=1}^{N_1}(\alpha_i)^{\Xi_{+M}^{i,  X}})^{c_M^+}(\prod_{i=1}^{c_M^+}\prod_{j=1}^{N_2}(\beta_{i, j}^{+M})^{\Xi_{+M}^{j,  Y}})(VN_A)^{c_M^+}\right)\cdot
\\
&\left(k_{-1}^{c_1^-}(\prod_{i=1}^{N_1}(\alpha_i)^{\Xi_{-1}^{i, X}})^{c_1^-}(\prod_{i=1}^{c_1^-}\prod_{j=1}^{N_2}(\beta_{i, j}^{-1})^{\Xi_{-1}^{j,  Y}})(VN_A)^{c_1^-}\right)
\left(k_{-2}^{c_2^-}(\prod_{i=1}^{c_2^-}\prod_{j=1}^{N_2}(\beta_{i, j}^{-2})^{\Xi_{-2}^{j,  Y}})(VN_A)^{c_2^-}\right)\cdots
\\
&\left(k_{-M}^{c_M^-}(\prod_{i=1}^{N_1}(\alpha_i)^{\Xi_{-M}^{i,  X}})^{c_M^-}(\prod_{i=1}^{c_M^-}\prod_{j=1}^{N_2}(\beta_{i, j}^{-M})^{\Xi_{-M}^{j,  Y}})(VN_A)^{c_M^-}\right)+O(V^{s-1})
\\
%---------------------------
=&V^s\bigg[N_A^s\cdot\left(k_{+1}^{c_1^+}(\prod_{i=1}^{N_1}(\alpha_i)^{\Xi_{+1}^{i, X}})^{c_1^+}(\prod_{i=1}^{c_1^+}\prod_{j=1}^{N_2}(\beta_{i, j}^{+1})^{\Xi_{+1}^{j,  Y}})\right)
\left(k_{+2}^{c_2^+}(\prod_{i=1}^{c_2^+}\prod_{j=1}^{N_2}(\beta_{i, j}^{+2})^{\Xi_{+2}^{j,  Y}})\right)\cdots
\\
&\left(k_{+M}^{c_M^+}(\prod_{i=1}^{N_1}(\alpha_i)^{\Xi_{+M}^{i,  X}})^{c_M^+}(\prod_{i=1}^{c_M^+}\prod_{j=1}^{N_2}(\beta_{i, j}^{+M})^{\Xi_{+M}^{j,  Y}})\right)\cdot
\left(k_{-1}^{c_1^-}(\prod_{i=1}^{N_1}(\alpha_i)^{\Xi_{-1}^{i, X}})^{c_1^-}(\prod_{i=1}^{c_1^-}\prod_{j=1}^{N_2}(\beta_{i, j}^{-1})^{\Xi_{-1}^{j,  Y}})\right)
\\
&\left(k_{-2}^{c_2^-}(\prod_{i=1}^{c_2^-}\prod_{j=1}^{N_2}(\beta_{i, j}^{-2})^{\Xi_{-2}^{j,  Y}})\right)\cdots
\left(k_{-M}^{c_M^-}(\prod_{i=1}^{N_1}(\alpha_i)^{\Xi_{-M}^{i,  X}})^{c_M^-}(\prod_{i=1}^{c_M^-}\prod_{j=1}^{N_2}(\beta_{i, j}^{-M})^{\Xi_{-M}^{j,  Y}})\right)+O(V^{-1})\bigg]\cdot
\end{aligned}
\end{equation}
\end{proof}
%-----------------Therom-----------------
\begin{thm}
The following limit exists:
\begin{equation}
\begin{aligned}
\mathcal{J}_{\tilde{c}} =& \lim_{V\rightarrow\infty}\frac{\omega_{\tilde{c}}}{V}
\\
=&\sum_{s=1}^N\sum_{
\tiny{
\substack{
     c\in [S]\\c=[y_1, y_2, . . . y_s]}
}
}
\left(
(-1)^{s-1}N_A^s\cdot
\left(k_{+1}^{c_1^+}(\prod_{i=1}^{N_1}(\alpha_i)^{\Xi_{+1}^{i, X}})^{c_1^+}(\prod_{i=1}^{c_1^+}\prod_{j=1}^{N_2}(\beta_{i, j}^{+1})^{\Xi_{+1}^{j,  Y}})\right)
\right.
\\
&
\left(k_{+2}^{c_2^+}(\prod_{i=1}^{c_2^+}\prod_{j=1}^{N_2}(\beta_{i, j}^{+2})^{\Xi_{+2}^{j,  Y}})\right)
\cdots
\left(k_{+M}^{c_M^+}(\prod_{i=1}^{N_1}(\alpha_i)^{\Xi_{+M}^{i,  X}})^{c_M^+}(\prod_{i=1}^{c_M^+}\prod_{j=1}^{N_2}(\beta_{i, j}^{+M})^{\Xi_{+M}^{j,  Y}})\right)
\\
&
\cdot
\left(k_{-1}^{c_1^-}(\prod_{i=1}^{N_1}(\alpha_i)^{\Xi_{-1}^{i, X}})^{c_1^-}(\prod_{i=1}^{c_1^-}\prod_{j=1}^{N_2}(\beta_{i, j}^{-1})^{\Xi_{-1}^{j,  Y}})\right)
\\
&\left(k_{-2}^{c_2^-}(\prod_{i=1}^{c_2^-}\prod_{j=1}^{N_2}(\beta_{i, j}^{-2})^{\Xi_{-2}^{j,  Y}})\right)
\cdots
\left(k_{-M}^{c_M^-}(\prod_{i=1}^{N_1}(\alpha_i)^{\Xi_{-M}^{i,  X}})^{c_M^-}(\prod_{i=1}^{c_M^-}\prod_{j=1}^{N_2}(\beta_{i, j}^{-M})^{\Xi_{-M}^{j,  Y}})\right)
\\
&\cdot\left.\frac{\left|\hat{Q}\left(\left\{y_{1},  y_{2},  \ldots,  y_{s}\right\}^{\mathrm{c}}\right)\right|}{\sum_{j \in S}\left|\hat{Q}\left(\{j\}^{\mathrm{c}}\right)\right|}\right)
\cdot
\prod_{l=1}^M\mathbbm{1}_{\left\{\sum_{h=1}^s
(\mathbbm{1}_{\{y_{h+1}=y_h+\Xi_l^Y\}}-
\mathbbm{1}_{\{y_{h+1}=y_h-\Xi_l^Y\}})=c_l\right\}}.
\end{aligned}
\end{equation}
\begin{remark}
When $V\rightarrow\infty$, it follows from Lemma~\ref{rate} that  
\begin{equation}
    \begin{aligned}
    &r_{+\ell}=V\left(k_{+\ell}N_A\prod_{j=1}^{N_2}(\beta_j)^{\Xi_{+\ell}^{j,  Y}}+O(V^{-1})\right),\quad\ell=2,3,\cdots,M
    \\
    &r_{-\ell}=V\left(k_{-\ell}N_A\prod_{j=1}^{N_2}(\beta_j)^{\Xi_{-\ell}^{j,  Y}}+O(V^{-1})\right),\quad\ell=1,2,\cdots,M-1
\\
or\quad\quad&
\\
    &r_{+1}=V\left(k_{+1}N_A\prod_{i=1}^{N_1}(\alpha_i)^{\Xi_{+1}^{i,  X}}\prod_{j=1}^{N_2}(\beta_j)^{\Xi_{+1}^{j,  Y}}+O(V^{-1})\right)
    \\
    &r_{-M}=V\left(k_{-M}N_A\prod_{i=1}^{N_1}(\alpha_i)^{\Xi_{-M}^{i,  X}}\prod_{j=1}^{N_2}(\beta_j)^{\Xi_{-M}^{j, Y}}+O(V^{-1})\right).
    \end{aligned}
\end{equation}
Denote
\begin{equation}
    \begin{aligned}
    &a_{+\ell}=k_{+\ell}N_A\prod_{j=1}^{N_2}(\beta_j)^{\Xi_{+\ell}^{j,  Y}},\quad\ell=2,3,\cdots,M
    \\
    &a_{-\ell}=k_{-\ell}N_A\prod_{j=1}^{N_2}(\beta_j)^{\Xi_{-\ell}^{j,  Y}},\quad\ell=1,2,\cdots,M-1
    \\
    or\quad\quad&
    \\
    &a_{+1}=k_{+1}N_A\prod_{i=1}^{N_1}(\alpha_i)^{\Xi_{+1}^{i,  X}}\prod_{j=1}^{N_2}(\beta_j)^{\Xi_{+1}^{j,  Y}}
    \\
    &a_{-M}=k_{-M}N_A\prod_{i=1}^{N_1}(\alpha_i)^{\Xi_{-M}^{i,  X}}\prod_{j=1}^{N_2}(\beta_j)^{\Xi_{-M}^{j, Y}}.
    \end{aligned}
\end{equation}
Then
\begin{equation}
    \begin{aligned}
    &r_{+\ell}=V\left(a_{+\ell}+O(V^{-1})\right),\quad\ell=2,3,\cdots,M
    \\
    &r_{-\ell}=V\left(a_{-\ell}+O(V^{-1})\right),\quad\ell=1,2,\cdots,M-1
\\
or\quad\quad&
\\
    &r_{+1}=V\left(a_{+1}+O(V^{-1})\right)
    \\
    &r_{-M}=V\left(a_{-M}+O(V^{-1})\right).
    \end{aligned}
\end{equation}

Replace all $r_{\pm\ell}$ in matrix Q with $a_{\pm\ell}$, there appear a new matrix $\tilde{Q}$. Obviously, $\tilde{Q}$ is computable in our model.

Then, it's easy to verify that $$\lim_{V\rightarrow\infty}\abs{\frac{1}{V}Q\left(\left\{y_{1},  y_{2},  \cdots,  y_{s}\right\}^{\mathrm{c}}\right)}= \abs{\hat{Q}\left(\left\{y_{1},  y_{2},  \cdots,  y_{s}\right\}^{\mathrm{c}}\right)},$$
for any $\left\{y_{1},  y_{2},  \cdots,  y_{s}\right\}\in[S].$
\end{remark}
\end{thm}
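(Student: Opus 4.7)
The plan is to insert the cycle-flux formula from Definition~\ref{thm strong} into the decomposition provided by Lemma~\ref{w_c1}, apply Lemma~\ref{q_yy} to the product of off-diagonal rates, and finish with a determinant-asymptotics analysis of the cofactor ratio. Because $S$ is finite by Hypothesis~\ref{hypthe 1}(i), the outer sum in Lemma~\ref{w_c1} runs over a finite index set; hence it suffices to establish the limit term-by-term and exchange the limit with the finite sum.

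For a fixed cycle $c=[y_1,\ldots,y_s]\in[S]$, Definition~\ref{thm strong} gives
$$\omega_c = (-1)^{s-1}\,q_{y_1y_2}\cdots q_{y_sy_1}\cdot\frac{|Q(\{y_1,\ldots,y_s\}^c)|}{\sum_{j\in S}|Q(\{j\}^c)|}.$$
Lemma~\ref{q_yy} already handles the numerator product: it equals $V^{s}(A_c+O(V^{-1}))$, where $A_c$ is exactly the rate-constant/concentration product appearing in the target expression. What remains is to show
$$\frac{|Q(\{y_1,\ldots,y_s\}^c)|}{\sum_{j\in S}|Q(\{j\}^c)|} = V^{1-s}\cdot\frac{|\hat{Q}(\{y_1,\ldots,y_s\}^c)|}{\sum_{j\in S}|\hat{Q}(\{j\}^c)|} + O(V^{-s}),$$
so that multiplying by $V^{s-1}$ produces a finite limit with the correct value.

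This determinant step is the heart of the argument. By Lemma~\ref{rate}, every off-diagonal entry of $Q$ has the form $V(a_{\pm\ell}+O(V^{-1}))$, and every diagonal entry, being the negative row-sum of off-diagonals, has the same form; hence $Q=V\hat{Q}+E$ with $E$ bounded entrywise. The determinant of a $k\times k$ principal submatrix is a homogeneous polynomial of degree $k$ in its entries, so Leibniz expansion (or row-by-row multilinearity) yields $|Q(H^c)|=V^{|H^c|}\,|\hat{Q}(H^c)|+O(V^{|H^c|-1})$. Applied with $|H^c|=N-s$ in the numerator and $|H^c|=N-1$ in each summand of the denominator, the $V$-powers combine to $V^{(N-s)-(N-1)}=V^{1-s}$ as required. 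One also needs $\sum_{j}|\hat{Q}(\{j\}^c)|>0$; this follows from the Markov-chain tree theorem, which represents each $|\hat{Q}(\{j\}^c)|$ (up to a common sign) as a positive sum of spanning-tree weights of the chain with rate matrix $\hat{Q}$, and irreducibility passes from $Q$ to $\hat{Q}$ because their supports coincide.

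Combining the two expansions and cancelling the $V$-powers gives $\omega_c/V\to(-1)^{s-1}A_c\cdot|\hat{Q}(\{y_1,\ldots,y_s\}^c)|/\sum_{j}|\hat{Q}(\{j\}^c)|$ for each cycle, which matches the stated summand; summing over the finite index set from Lemma~\ref{w_c1} then yields the theorem. The main obstacle I expect is the determinant asymptotic: one must absorb the nested $O(V^{-1})$ perturbations in every entry into a single $O(V^{|H^c|-1})$ correction at the determinant level and verify that the denominator stays bounded away from zero uniformly in $V$. Everything else is combinatorial bookkeeping from the prior lemmas.
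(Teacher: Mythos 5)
Your proposal follows essentially the same route as the paper: insert the cycle-flux formula into the finite decomposition of Lemma~\ref{w_c1}, use Lemma~\ref{q_yy} to extract the factor $V^{s}(A_c+O(V^{-1}))$ from the product of transition rates, factor $V^{N-s}$ and $V^{N-1}$ out of the numerator and denominator determinants so that the ratio contributes $V^{1-s}$, and pass to the limit term by term over the finite index set. The only difference is that you supply explicit justification for two steps the paper dismisses as ``easy to verify'' --- the Leibniz-expansion control of the $O(V^{-1})$ entrywise perturbations at the determinant level, and the non-vanishing of $\sum_{j}\bigl|\hat{Q}(\{j\}^{\mathrm{c}})\bigr|$ via the Markov-chain tree theorem --- which is a welcome strengthening rather than a departure.
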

%--------------------proof------------
\begin{proof}
From equation~\ref{w_c}, we have
$$\omega_{c}=(-1)^{s-1} q_{y_{1} y_{2}} q_{y_{2} y_{3}} \ldots q_{y_{s} y_{1}} \frac{\left|Q\left(\left\{y_{1},  y_{2},  \ldots,  y_{s}\right\}^{\mathrm{c}}\right)\right|}{\sum_{j \in S}\left|Q\left(\{j\}^{\mathrm{c}}\right)\right|}.$$
Then,
    $$\frac{\left|Q\left(\left\{y_{1},  y_{2},  \ldots,  y_{s}\right\}^{\mathrm{c}}\right)\right|}{\sum_{j \in S}\left|Q\left(\{j\}^{\mathrm{c}}\right)\right|}
    =\frac{V^{N-s}\left|\frac{1}{V}Q\left(\left\{y_{1},  y_{2},  \ldots,  y_{s}\right\}^{\mathrm{c}}\right)\right|}{V^{N-1}\sum_{j \in S}\left|\frac{1}{V}Q\left(\{j\}^{\mathrm{c}}\right)\right|}
    =\frac{\left|Q\left(\left\{y_{1},  y_{2},  \ldots,  y_{s}\right\}^{\mathrm{c}}\right)\right|}{V^{s-1}\sum_{j \in S}\left|Q\left(\{j\}^{\mathrm{c}}\right)\right|}$$
From lemma~\ref{q_yy},
\begin{equation}
    \begin{aligned}
&q_{y_{1} y_{2}} q_{y_{2} y_{3}} \ldots q_{y_{s} y_{1}}
\\
=&V^s\bigg[N_A^s\cdot\left(k_{+1}^{c_1^+}(\prod_{i=1}^{N_1}(\alpha_i)^{\Xi_{+1}^{i, X}})^{c_1^+}(\prod_{i=1}^{c_1^+}\prod_{j=1}^{N_2}(\beta_{i, j}^{+1})^{\Xi_{+1}^{j,  Y}})\right)
\left(k_{+2}^{c_2^+}(\prod_{i=1}^{c_2^+}\prod_{j=1}^{N_2}(\beta_{i, j}^{+2})^{\Xi_{+2}^{j,  Y}})\right)\cdots
\\
&\left(k_{+M}^{c_M^+}(\prod_{i=1}^{N_1}(\alpha_i)^{\Xi_{+M}^{i,  X}})^{c_M^+}(\prod_{i=1}^{c_M^+}\prod_{j=1}^{N_2}(\beta_{i, j}^{+M})^{\Xi_{+M}^{j,  Y}})\right)\cdot
\left(k_{-1}^{c_1^-}(\prod_{i=1}^{N_1}(\alpha_i)^{\Xi_{-1}^{i, X}})^{c_1^-}(\prod_{i=1}^{c_1^-}\prod_{j=1}^{N_2}(\beta_{i, j}^{-1})^{\Xi_{-1}^{j,  Y}})\right)
\\
&\left(k_{-2}^{c_2^-}(\prod_{i=1}^{c_2^-}\prod_{j=1}^{N_2}(\beta_{i, j}^{-2})^{\Xi_{-2}^{j,  Y}})\right)\cdots
\left(k_{-M}^{c_M^-}(\prod_{i=1}^{N_1}(\alpha_i)^{\Xi_{-M}^{i,  X}})^{c_M^-}(\prod_{i=1}^{c_M^-}\prod_{j=1}^{N_2}(\beta_{i, j}^{-M})^{\Xi_{-M}^{j,  Y}})\right)+O(V^{-1})\bigg]
    \end{aligned}
\end{equation}
Thus,
\begin{equation}
    \begin{aligned}
    \lim_{V\rightarrow\infty}\frac{\omega_c}{V}
    =&(-1)^{s-1}\lim_{V\rightarrow\infty}\frac{1}{V}\cdot
    \frac{\left|Q\left(\left\{y_{1},  y_{2},  \ldots,  y_{s}\right\}^{\mathrm{c}}\right)\right|}{V^{s-1}\sum_{j \in S}\left|Q\left(\{j\}^{\mathrm{c}}\right)\right|}\cdot V^s\\ 
    &\cdot\bigg[N_A^s\cdot\left(k_{+1}^{c_1^+}(\prod_{i=1}^{N_1}(\alpha_i)^{\Xi_{+1}^{i, X}})^{c_1^+}(\prod_{i=1}^{c_1^+}\prod_{j=1}^{N_2}(\beta_{i, j}^{+1})^{\Xi_{+1}^{j,  Y}})\right)
\left(k_{+2}^{c_2^+}(\prod_{i=1}^{c_2^+}\prod_{j=1}^{N_2}(\beta_{i, j}^{+2})^{\Xi_{+2}^{j,  Y}})\right)\cdots
\\
&\left(k_{+M}^{c_M^+}(\prod_{i=1}^{N_1}(\alpha_i)^{\Xi_{+M}^{i,  X}})^{c_M^+}(\prod_{i=1}^{c_M^+}\prod_{j=1}^{N_2}(\beta_{i, j}^{+M})^{\Xi_{+M}^{j,  Y}})\right)\cdot
\left(k_{-1}^{c_1^-}(\prod_{i=1}^{N_1}(\alpha_i)^{\Xi_{-1}^{i, X}})^{c_1^-}(\prod_{i=1}^{c_1^-}\prod_{j=1}^{N_2}(\beta_{i, j}^{-1})^{\Xi_{-1}^{j,  Y}})\right)
\\
&\left(k_{-2}^{c_2^-}(\prod_{i=1}^{c_2^-}\prod_{j=1}^{N_2}(\beta_{i, j}^{-2})^{\Xi_{-2}^{j,  Y}})\right)\cdots
\left(k_{-M}^{c_M^-}(\prod_{i=1}^{N_1}(\alpha_i)^{\Xi_{-M}^{i,  X}})^{c_M^-}(\prod_{i=1}^{c_M^-}\prod_{j=1}^{N_2}(\beta_{i, j}^{-M})^{\Xi_{-M}^{j,  Y}})\right)+O(V^{-1})\bigg]
\\
=&(-1)^{s-1}\bigg[N_A^s\cdot\left(k_{+1}^{c_1^+}(\prod_{i=1}^{N_1}(\alpha_i)^{\Xi_{+1}^{i, X}})^{c_1^+}(\prod_{i=1}^{c_1^+}\prod_{j=1}^{N_2}(\beta_{i, j}^{+1})^{\Xi_{+1}^{j,  Y}})\right)
\left(k_{+2}^{c_2^+}(\prod_{i=1}^{c_2^+}\prod_{j=1}^{N_2}(\beta_{i, j}^{+2})^{\Xi_{+2}^{j,  Y}})\right)\cdots
\\
&\left(k_{+M}^{c_M^+}(\prod_{i=1}^{N_1}(\alpha_i)^{\Xi_{+M}^{i,  X}})^{c_M^+}(\prod_{i=1}^{c_M^+}\prod_{j=1}^{N_2}(\beta_{i, j}^{+M})^{\Xi_{+M}^{j,  Y}})\right)\cdot
\left(k_{-1}^{c_1^-}(\prod_{i=1}^{N_1}(\alpha_i)^{\Xi_{-1}^{i, X}})^{c_1^-}(\prod_{i=1}^{c_1^-}\prod_{j=1}^{N_2}(\beta_{i, j}^{-1})^{\Xi_{-1}^{j,  Y}})\right)
\\
&\left(k_{-2}^{c_2^-}(\prod_{i=1}^{c_2^-}\prod_{j=1}^{N_2}(\beta_{i, j}^{-2})^{\Xi_{-2}^{j,  Y}})\right)\cdots
\left(k_{-M}^{c_M^-}(\prod_{i=1}^{N_1}(\alpha_i)^{\Xi_{-M}^{i,  X}})^{c_M^-}(\prod_{i=1}^{c_M^-}\prod_{j=1}^{N_2}(\beta_{i, j}^{-M})^{\Xi_{-M}^{j,  Y}})\right)\bigg]
\\
&\cdot\frac{\left|\hat{Q}\left(\left\{y_{1},  y_{2},  \ldots,  y_{s}\right\}^{\mathrm{c}}\right)\right|}{\sum_{j \in S}\left|\hat{Q}\left(\{j\}^{\mathrm{c}}\right)\right|}
    \end{aligned}
\end{equation}
Therefore,
\begin{align*}
\mathcal{J}_{\tilde{c}}=&\lim_{V\rightarrow\infty}\frac{1}{V}\sum_{s=1}^N\sum_{
\tiny{
\substack{
     c\in [S]\\c=[y_1, y_2, . . . y_s]}
}
}
\left(
w_c
\cdot
\prod_{l=1}^M\mathbbm{1}_{\left\{\sum_{h=1}^s
(\mathbbm{1}_{\{y_{h+1}=y_h+\Xi_l^Y\}}-
\mathbbm{1}_{\{y_{h+1}=y_h-\Xi_l^Y\}})=c_l\right\}}
\right)
\\
=&\sum_{s=1}^N\sum_{
\tiny{
\substack{
     c\in [S]\\c=[y_1, y_2, . . . y_s]}
}
}
\left(
\lim_{V\rightarrow\infty}\frac{w_c}{V}
\cdot
\prod_{l=1}^M\mathbbm{1}_{\left\{\sum_{h=1}^s
(\mathbbm{1}_{\{y_{h+1}=y_h+\Xi_l^Y\}}-
\mathbbm{1}_{\{y_{h+1}=y_h-\Xi_l^Y\}})=c_l\right\}}
\right)
\\
=&\sum_{s=1}^N\sum_{
\tiny{
\substack{
     c\in [S]\\c=[y_1, y_2, . . . y_s]}
}
}
\left(
(-1)^{s-1}N_A^s\cdot
\left(k_{+1}^{c_1^+}(\prod_{i=1}^{N_1}(\alpha_i)^{\Xi_{+1}^{i, X}})^{c_1^+}(\prod_{i=1}^{c_1^+}\prod_{j=1}^{N_2}(\beta_{i, j}^{+1})^{\Xi_{+1}^{j,  Y}})\right)
\right.
\\
&
\left(k_{+2}^{c_2^+}(\prod_{i=1}^{c_2^+}\prod_{j=1}^{N_2}(\beta_{i, j}^{+2})^{\Xi_{+2}^{j,  Y}})\right)
\cdots
\left(k_{+M}^{c_M^+}(\prod_{i=1}^{N_1}(\alpha_i)^{\Xi_{+M}^{i,  X}})^{c_M^+}(\prod_{i=1}^{c_M^+}\prod_{j=1}^{N_2}(\beta_{i, j}^{+M})^{\Xi_{+M}^{j,  Y}})\right)
\\
&
\cdot
\left(k_{-1}^{c_1^-}(\prod_{i=1}^{N_1}(\alpha_i)^{\Xi_{-1}^{i, X}})^{c_1^-}(\prod_{i=1}^{c_1^-}\prod_{j=1}^{N_2}(\beta_{i, j}^{-1})^{\Xi_{-1}^{j,  Y}})\right)
\\
&\left(k_{-2}^{c_2^-}(\prod_{i=1}^{c_2^-}\prod_{j=1}^{N_2}(\beta_{i, j}^{-2})^{\Xi_{-2}^{j,  Y}})\right)
\cdots
\left(k_{-M}^{c_M^-}(\prod_{i=1}^{N_1}(\alpha_i)^{\Xi_{-M}^{i,  X}})^{c_M^-}(\prod_{i=1}^{c_M^-}\prod_{j=1}^{N_2}(\beta_{i, j}^{-M})^{\Xi_{-M}^{j,  Y}})\right)
\\
&\cdot\left.\frac{\left|\hat{Q}\left(\left\{y_{1},  y_{2},  \ldots,  y_{s}\right\}^{\mathrm{c}}\right)\right|}{\sum_{j \in S}\left|\hat{Q}\left(\{j\}^{\mathrm{c}}\right)\right|}\right)
\cdot
\prod_{l=1}^M\mathbbm{1}_{\left\{\sum_{h=1}^s
(\mathbbm{1}_{\{y_{h+1}=y_h+\Xi_l^Y\}}-
\mathbbm{1}_{\{y_{h+1}=y_h-\Xi_l^Y\}})=c_l\right\}}.
    \end{align*}
\end{proof}
\section*{Acknowledgements}
This work is partially supported by the National Natural Science Foundation of China (No. 11701265, No. 11961033).
%------------------------------------------------------------------ ---------------------

%-------------------------------------------------------------------------------------------------------

%-------------------------------------------------------------------------------------------------------

\begin{thebibliography}{10}
\bibitem{ref10} Jia C, Liu X F, Qian M P,Jiang D Q and Zhang Y P 2010. Kinetic behavior of the general modifier mechanism of botts and morales with non-equilibrium binding.  Journal of Theoretical Biology,  296(7):13-20.

\bibitem{ref13}Jia C, Qian M, and Jiang D Q 2014. Overshoot in biological systems modelled by Markov chains: a non-equilibrium dynamic phenomenon. Iet Systems Biology, 8(4):138-145.

\bibitem{ref14}Ge H, and Qian H 2017. Mathematical formalism of nonequilibrium thermodynamics for nonlinear chemical reaction systems with general rate law, J. Stat. Phys. 166, 190

\bibitem{ref22} Jiang D Q, Qian M, and Qian M P 2004. Mathematical theory of nonequilibrium steady states : on the frontier of probability and dynamical systems Springer Berlin Heidelberg

\bibitem{ref17}Qian H and Qian M 2012. Stochastic theory of nonequilibrium steady states.  part ii: applications in chemical biophysics.  Physics Reports: A Review Section of Physics Letters (Section C),  510(3),  87-118

\bibitem{ref20}GreenLee and Marty 2009. Chemical biophysics: quantitative analysis of cellular systems.  SIAM Review,  51(2),  444-445

\bibitem{ref19}Jia C, Li Y, and  Qian M P 2013. A General Analysis of Single IP3 Receptors Modulated by Cytosolic Ca 2+ and IP3.  The Third International Symposium on Optimization and Systems Biology, (2013).

\bibitem{ref18}Gupta P B, Fillmore C M, Jiang G Z, Shapira S D, Tao K, Kuperwasser C and Lander E S 2011. Stochastic state transitions give rise to phenotypic equilibrium in populations of cancer cells. Cell %Ïž°ûÈº

\bibitem{ref1}Peng Y, Qian H, Beard D A, and Ge H 2020. Universal relation between thermodynamic driving force and one-way fluxes in a nonequilibrium chemical reaction with complex mechanism.

\bibitem{ref23} Rao R, and Esposito M 2016. Nonequilibrium thermodynamics of chemical reaction networks: wisdom from stochastic thermodynamics.  Physical Review X,  6(4).

\bibitem{ref24} Mcquarrie D A 1967/ Stochastic approach to chemical kinetics.  Journal of Applied Probability,  4(3),  413-478.

\bibitem{ref25} Qian C, Qian M, and Qian M P 1981. Markov chain as a model of Hill's theory on circulation, Sci. Sin. 24, 1431.

\bibitem{ref26}Jiang D Q, Qian M, and Qian M P 2004. Circulation Distribution, Entropy Production and Irreversibility of Finite Markov Chains with Continuous Parameter (Springer,  Berlin), pp.  45-66.

\bibitem{ref28}Jiang D Q, Qian M, and Qian M P 2004. Circulation Distribution,  Entropy Production and Irreversibility of Denumerable Markov Chains (Springer, Berlin)

\bibitem{ref27} Polettini M and Esposito M 2014. Irreversible thermodynamics of open chemical networks.  I.  Emergent cycles and broken conservation laws, J. Chem.Phys. B 141,  024117 .

\bibitem{ref2}Ethier S N and Kurtz T G 1986. Markov Processes: Characterization and Convergence. Biometrics, p.454.
\end{thebibliography}
\end{document}